\newcommand{\caseref}[1]{\hyperref[#1]{\textcolor{cgtGray}{\sffamily\bfseries{(\ref*{#1})}}\xspace}}
\title{Primal-Dual Cops and Robber}
\author{Minh Tuan Ha}{Karlsruhe Institute of Technology, Germany}{uwpwm@student.kit.edu}{}{}
\author{Paul Jungeblut}{Karlsruhe Institute of Technology, Germany}{paul.jungeblut@kit.edu}{https://orcid.org/0000-0001-8241-2102}{}
\author{Torsten Ueckerdt}{Karlsruhe Institute of Technology, Germany}{torsten.ueckerdt@kit.edu}{https://orcid.org/0000-0002-0645-9715}{}
\author{Pawe{\l} {\.{Z}}yli{\'{n}}ski}{University of Gda{\'{n}}sk, Poland}{pawel.zylinski@ug.edu.pl}{}{}
\authorrunning{M. T. Ha, P. Jungeblut, T. Ueckerdt and P. {\.{Z}}yli{\'{n}}ski}
\keywords{Cops and robber, planar graph, dual graph}
\begin{document}

\maketitle

\begin{abstract}
    Cops and Robber is a family of two-player games played on graphs in which one player controls a~number of cops and the other player controls a robber.
    In alternating turns, each player moves (all) their figures.
    The cops try to capture the robber while the latter tries to flee indefinitely.
    In this paper we consider a variant of the game played on a planar graph where the robber moves between adjacent vertices while the cops move between adjacent faces.
    The cops capture the robber if they occupy all his incident faces.
    We prove that a constant number of cops suffices to capture the robber on~any planar graph of maximum degree~$\Delta$ if and only if~$\Delta \leq 4$.
\end{abstract}

\section{Introduction}
\emph{Cops and Robber} is probably the most classical combinatorial pursuit-evasion game on graphs.
The robber models an intruder in a network that the cops try to capture.
Two players play with complete information on a fixed finite connected graph~$G = (V,E)$.
The cop player controls a set of~$k$ cops, each occupying a vertex of~$G$ (possibly several cops on the same vertex), while the robber player controls a single robber that also occupies a vertex of~$G$.
The players take alternating turns, where the cop player in her\footnote{
    We use female pronouns for the cop player and male pronouns for the robber player.
} turn can decide for each cop individually whether to stay at its position or move the cop along an edge of~$G$ to an adjacent vertex.
Similarly, the robber player on his turn can leave the robber at its position or move it along an edge of~$G$.
The cop player starts by choosing starting positions for her~$k$ cops and wins the game as soon as at least one cop occupies the same vertex as the robber, i.e., when the robber is \emph{captured}.
The robber player, seeing the cops' positions, chooses the starting position for his robber and wins if he can avoid capture indefinitely.
The least integer~$k$ for which, assuming perfect play on either side,~$k$ cops can always capture the robber, is called the \emph{cop number} of~$G$, usually denoted by~$c(G)$.

In this paper, we introduce \emph{Primal-Dual Cops and Robber}, which is played on a plane graph~$G$, i.e., with a fixed plane embedding.
Here, the cops occupy the faces of~$G$ and can move between adjacent faces (i.e., along edges of the dual graph~$G^*$), while the robber still moves along edges between adjacent vertices of~$G$.
In this game, the robber is captured if \emph{every} face incident to his vertex is occupied by at least one cop.
Analogously, we call the least integer~$k$ for which~$k$ cops can always capture the robber in the Primal-Dual Cops and Robber game the \emph{primal-dual cop number} of~$G$ and denote it by~$c^*(G)$.

An obvious lower bound for~$c^*(G)$ is the maximum number of faces incident to any vertex in~$G$:
The robber can choose such a vertex as his start position and just stay there indefinitely (note that there is no \emph{zugzwang}, i.e., no obligation to move during one's turn).
In particular, if~$G$ has maximum degree~$\Delta(G)$ and there exists a vertex~$v$ with~$\deg(v) = \Delta(G)$, which is not a cut-vertex, then $c^*(G) \geq \Delta(G)$.
E.g., $c^*(K_{2,n}) = \Delta(K_{2,n}) = n$ for any~$n \geq 2$.

\subparagraph{Our contribution.}
We investigate whether the primal-dual cop number~$c^*(G)$ is bounded in terms of~$\Delta(G)$ for all plane graphs~$G$.
The answer is \enquote*{Yes} if $\Delta(G) \leq 4$ and \enquote*{No} otherwise.
In particular, our main result is the following theorem:

\begin{theorem}
    \label{thm:main}
    Each of the following holds:
    \begin{enumerate}
        \item\label{enum:degree_3}
        For every plane graph~$G$ with~$\Delta(G) = 3$ we have~$c^*(G) \leq 3$ and this is tight.

        \item\label{enum:degree_4}
        For every plane graph~$G$ with~$\Delta(G) = 4$ we have~$c^*(G) \leq 6$ and this is tight.
        \item\label{enum:degree_5}
        For infinitely many $n$-vertex plane graphs~$G$ with~$\Delta(G) = 5$ we have $c^*(G) = \Omega\bigl(\sqrt{\log n}\bigr)$.
    \end{enumerate}
\end{theorem}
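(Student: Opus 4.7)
My plan is to attack the three items of \cref{thm:main} separately, since they require rather different tools. I would organize the paper around the three cases and use the lower bound observation from the introduction as a starting point for the lower bounds.

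For the cubic case (item~\ref{enum:degree_3}), the lower bound is immediate: $K_4$ is a plane graph with $\Delta(K_4) = 3$ whose vertices are all non-separating and incident to three faces each, giving $c^*(K_4) \geq 3$. The heart of this item is the upper bound, for which I would exploit that when the robber traverses an edge~$uv$ the two faces incident to~$uv$ belong to both~$u$ and~$v$, so~$v$ has at most one ``new'' face compared to~$u$. I plan to design an invariant and a potential function on (robber position, cop configuration) so that each round the cops either capture the robber or strictly reduce the potential; a natural choice combines the primal distance from the robber to a carefully chosen ``anchor'' vertex~$w$ with the number of faces of~$w$ still uncovered. The tricky case will be a robber who tries to flee indefinitely; this should be handled by arguing that the anchor vertex can be re-chosen without losing accumulated progress, so that the robber is eventually squeezed into an arbitrarily small region.

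For item~\ref{enum:degree_4} neither direction is routine. For the upper bound of six cops I would try a ``two-plus-four'' split: two cops serve as a permanent barrier on the two faces that share the edge the robber just used (blocking his immediate retreat), while the remaining four cops rearrange onto the four faces of the robber's current vertex. Tracking the rearrangement requires a local analysis of the cyclic face order around each degree-$4$ vertex, and I expect this bookkeeping to be the main obstacle in item~\ref{enum:degree_4}. For the matching lower bound I would hunt for an explicit small plane graph~$H$ with $\Delta(H) = 4$ on which the robber defeats any five cops, together with an evasion strategy that at each round preserves at least one uncovered face of the robber's vertex; verifying this should reduce to a finite case analysis on~$H$.

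For item~\ref{enum:degree_5} I would construct an infinite family of plane graphs with $\Delta = 5$ and primal-dual cop number $\Omega(\sqrt{\log n})$, adapting the girth-based lower bound paradigm from the classical cop-number literature. Good candidate families are finite quotients of the hyperbolic $\{5, 4\}$-tiling, which give $5$-regular plane graphs of logarithmic girth on $n$ vertices. If fewer than $\sqrt{\log n}$ cops are deployed, an initial cop-free neighborhood of radius $\Omega(\sqrt{\log n})$ around the robber will be tree-like both in the primal and in the dual graph, so a standard ``escape into the tree'' argument lets the robber avoid capture indefinitely. The main technical hurdle will be porting the usual ``cop-on-vertex'' girth argument to the primal-dual setting where capture depends on the dual structure; I expect the exponent~$1/2$ in $\sqrt{\log n}$ to reflect a trade-off between primal and dual girth in a planar embedding.
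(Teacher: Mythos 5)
Your plan contains genuine gaps in all three items; the most clear-cut failure is in item~\ref{enum:degree_5}. Finite quotients of the hyperbolic $\{5,4\}$-tiling are not plane graphs (they live on higher-genus surfaces), and no plane substitute exists: by Euler's formula a planar graph with minimum degree~$5$ has girth~$3$, and more generally a plane graph with $\Omega(\sqrt{\log n})$ girth must have average degree close to~$2$, so there is no $5$-regular plane family with logarithmic girth on which to run an ``escape into the tree'' argument. The paper's actual route is entirely different: it builds a subdivided $n \times n$ grid $G_{n,s,r}$ with nested rings inserted into each face so that the robber (travelling along subdivision paths) is strictly faster than any face-cop (who must pay $3s$ dual steps per grid edge), and then invokes the Fomin et al.\ lower bound of $\Omega(\sqrt{\log n})$ for cops that are slower than the robber on the grid. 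The $\sqrt{\log n}$ has nothing to do with girth; it is inherited from that fast-robber theorem.

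The other two items also miss the key ideas. For the upper bound in~\ref{enum:degree_4}, your ``two cops as a permanent barrier on the two faces of the last-used edge'' does not constrain the robber at all, since he is captured only when \emph{all} four incident faces are occupied and he can simply walk away from the barrier; the paper instead observes that four face-cops can simulate one vertex-cop on the planar dual, runs Aigner--Fromme's three-cop strategy on~$G^*$ against the trajectory of one of the four face-sequences around the robber, leaves one cop to escort that caught sequence forever, and repeats, so that $3+3=6$ cops suffice. For the lower bound in~\ref{enum:degree_4}, ``hunt for a small graph and do a finite case analysis'' is not a proof: the paper's witness is an elaborate gadget obtained from the dodecahedron with every edge subdivided five times, degree-based vertex splits, and about a hundred nested $30$-cycles per face, whose analysis rests on a distance-$2$ evasion lemma against two cops on~$D_5$; nothing suggests a small example exists. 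Finally, for the upper bound in~\ref{enum:degree_3}, your anchor-vertex potential does not address the central difficulty (the robber fleeing forever), which you defer to an unproved claim that the anchor can be re-chosen ``without losing progress.'' The paper's potential is instead the sum of the three cops' dual distances to \emph{moving} target faces (the three faces at the robber's current vertex), together with a careful case analysis showing this sum strictly decreases whenever the robber moves, using that an edge traversal preserves two faces up to adjacency and fixes the third.
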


\begin{remark}[Connectedness]
    The classical cop number~$c(G)$ (and many of its countless variants in the literature) are defined only for \emph{connected} graphs~$G$, as only the cops in the same component can capture the robber.
    However, in the Primal-Dual Cops and Robber game we do not require~$G$ to be connected, because the dual graph~$G^*$ of a planar graph~$G$ is always connected.
\end{remark}

\subparagraph{Related work.}
Let us just briefly mention that Cops and Robber was introduced by Nowakowski and Winkler~\cite{Nowakowski1983_VertexToVertex} and Quillot~\cite{Quilliot1978_Jeux} for one cop and Aigner and Fromme~\cite{Aigner1984_Planar3Cops} for any number of cops.
Since then numerous results and variants were presented, see e.g.~\cite{Bonato2022_Invitation,Bonato2011_TheGameOfCopsAndRobbers}.
Perhaps most similar to our new variant are the recent surrounding variant of Burgess et al.~\cite{Burgess2020_CopsThatSurroundARobber} with vertex-cops and the containment variant of Crytser et al.~\cite{Crytser2020_Containment,Pralat2015_Containment} with edge-cops.
In these variants the robber is captured if every adjacent vertex, respectively every incident edge, is occupied by a cop.
The smallest number of cops that always suffices for any planar graph~$G$ is three in the classical variant~\cite{Aigner1984_Planar3Cops}, seven in the surrounding variant~\cite{Bradshaw2019_SurroundingBoundedGenus}, $7\Delta(G)$ in the containment variant~\cite{Crytser2020_Containment} and~three when both, cops and robber, move on edges~\cite{Dudek2014_CopsAndRobberPlayingOnEdges}.
The surrounding variants with vertex- and edge-cops are not restricted to planar graphs and their respective cop numbers are always within a factor of~$\Delta(G)$ of each other, but they can be arbitrarily far apart from the classical cop number~\cite{Jungeblut2023_Surrounding}.
Finally, a wider perspective locates our problem as a
variant of the \emph{precinct} version of the Cops and Robber game, where each cop's movements are restricted to an assigned ``beat'' or subgraph, see~\cite{Clarke2002_PhD,Enright2023_MultiLayer,Fitzpatrick1997_PhD} for more details.

\section{Plane Graphs with \texorpdfstring{$\bm{\Delta = 3}$}{Maximum Degree 3}}

We start with an observation that simplifies the proofs of Statements~\ref{enum:degree_3} and~\ref{enum:degree_4} in \cref{thm:main}.

\begin{observation}
    \label{obs:robber_avoids_deg1_vertices}
    Let the robber be on a vertex~$u$ with a neighbor~$v$ of degree one.
    Then the~robber is never required to move to~$v$ to evade the cops.
\end{observation}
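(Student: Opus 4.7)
The plan is to show that the robber's position at $u$ strictly dominates any hypothetical position at $v$ --- both in terms of the capture condition and of the available moves --- so that any evading strategy which visits $v$ can be rerouted to stay at $u$ instead. First, I would record the key planar-geometric fact: since $\deg(v) = 1$, the edge $uv$ is a bridge, and in a plane embedding both sides of a bridge lie in the same face. Hence $v$ is incident to exactly one face $F$, and this face is also incident to $u$. Therefore the capture condition at $v$ reduces to ``some cop occupies $F$'', whereas the capture condition at $u$ requires cops on every face incident to $u$, including $F$. In particular, whenever the robber at $u$ would be captured, the same cop configuration would also capture a robber sitting at $v$.

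Next, I would formalise the dominance by induction on the game tree: for every cop configuration~$C$, if the cops can capture the robber starting from $(v,C)$ within $n$ rounds, then they can also capture the robber starting from $(u,C)$ within $n$ rounds (not necessarily with the same cop strategy). The base case is the capture comparison above. On a cops' turn, the cops may copy their $v$-side move onto the $u$-side and the case shrinks. On a robber's turn, the robber at $v$ has only two options, namely staying at $v$ or moving to $u$, and both are also available to the robber at $u$; moreover, ``stay at $u$'' is at least as safe as ``stay at $v$'' by the capture comparison, and the move $v\to u$ is matched on the $u$-side by staying at $u$. The contrapositive of this induction gives exactly what we want: if the robber has an evading strategy $\sigma$ from $(u,C)$, then replacing every move-to-$v$ with a stay-at-$u$ (and every subsequent stay-at-$v$ or return-to-$u$ with further stays-at-$u$) yields an evading strategy $\sigma'$ that never visits $v$.

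The main obstacle will be phrasing the induction robustly against the cops' adaptiveness: since the robber sits on different vertices in the two parallel plays, the cops can in principle respond differently on the two sides, so one cannot simply fix a single cop strategy and compare plays directly. The cleanest remedy is to compare \emph{game values} (``the cops have \emph{some} winning strategy from this state'') rather than comparing against a fixed cop strategy; the inductive step then only needs to exhibit one matching response on each side, which the move-by-move comparison above provides. Once this is set up, the induction itself is entirely routine.
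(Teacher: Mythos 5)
Your proposal is correct and follows essentially the same route as the paper: the single face incident to the degree-one vertex $v$ is also incident to $u$, so the capture condition at $v$ is weaker, and the robber's only options at $v$ (stay or return to $u$) are dominated by simply staying at $u$, which is permitted since there is no zugzwang. The paper states this in three informal sentences; your game-value induction is just a more formal packaging of the identical argument.
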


Clearly, this is true because the set of faces required to capture the robber at~$v$ is a subset of the faces required to capture him at~$u$.
Further, his only possible moves at~$v$ are either staying there or moving back to~$u$.
As there is no zugzwang, he could just stay at~$u$ all along.

\begin{proof}[Proof of Statement \ref{enum:degree_3} in \cref{thm:main}]
    Keeping in mind the lower bound of $c^*(G) \geq \Delta(G)$ for biconnected graphs, all we need is to provide a winning strategy for three cops, say~$c_1, c_2, c_3$, in a plane graph~$G$ with~$\Delta(G) \leq 3$.
    Each cop is assigned a \emph{target face}, initially the (up to) three faces incident to the robber.
    The goal of each cop~$c_i$ is to reach her \emph{target face}~$f_i$, thereby capturing the robber when all three cops arrived at their target faces.
    If the robber moves, each cop updates her target face.
    Our strategy guarantees that the total distance of all three cops to their target faces decreases over time, so it reaches zero after finitely many turns.

    Without loss of generality assume that~$G$ is a plane graph such that it contains only degree-$3$ vertices and degree-$1$ vertices (which can always be achieved by adding leaves to vertices not yet having the correct degree).
    First, the cops choose arbitrary faces to start on.
    Then the robber chooses his start vertex~$u$, which we assume to be of degree three by \cref{obs:robber_avoids_deg1_vertices} (it is trivial to capture him if all vertices are of degree one). 
    Let~$\angle^u_1, \angle^u_2, \angle^u_3$ be the three angles incident to~vertex $u$ and denote the face of~$G$ containing an angle~$\angle \in \{\angle^u_1, \angle^u_2, \angle^u_3\}$ by~$f(\angle)$.
    Now, for each cop~$c_i$, $i = 1,2,3$, we initially set~$f_i = f(\angle^u_i)$.
    
    Clearly, in every game the robber has to move at some point to avoid being captured.
    Assume now that the robber moves from vertex~$u$ to vertex~$v$ (both of degree three by \cref{obs:robber_avoids_deg1_vertices}).
    Without loss of generality assume that the angles around~$u$ and~$v$ are labeled as depicted in \cref{fig:upper_bound_max_degree3_angles}, with~$f_i = f(\angle^u_i)$ being the current target face of cop~$c_i$, $i=1,2,3$.
    
    \begin{figure}[tb]
        \centering
        \includegraphics{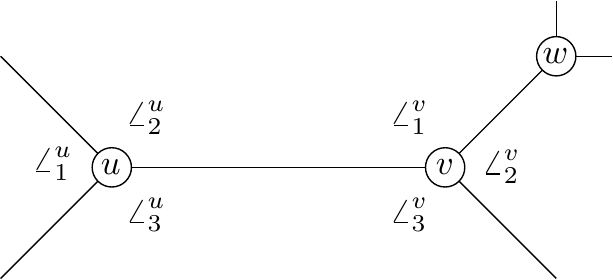}
        \caption{Labeling of the angles for a robber move from~$u$ to~$v$ (and possibly further to~$w$).}
        \label{fig:upper_bound_max_degree3_angles}
    \end{figure}
    
    Assume first that~$c_3$ (or symmetrically~$c_2$) has not reached her target face yet.
    In this case we assign the new target faces~$f_1 = f(\angle^v_1)$, $f_2 = f(\angle^v_2)$ and~$f_3 = f(\angle^v_3)$.
    Note that for~$i = 1,2$, faces~$f(\angle^u_i)$ and~$f(\angle^v_i)$ are adjacent, and so cop~$c_i$ can keep the distance to her target face unchanged (or even decrease it) during her next turn.
    Next, observe that~$f(\angle^u_3) = f(\angle^v_3)$, and so cop~$c_3$ can decrease her distance by one during her next turn.
    Thus, after the relevant moves, the total distance of the three cops to their target faces decreases by at least one.
    
    Assume now that~$c_2$ and~$c_3$ have already reached their target faces (but~$c_1$ has not, as the game would be over otherwise).
    If setting $f_1 = f(\angle^v_1)$ and $f_2=f(\angle^v_2)$ while keeping $f_3= f(\angle^u_3) = f(\angle^v_3)$ and then making the relevant moves toward target faces decreases the total distance of the three cops, we proceed with these assignments/moves.
    Otherwise, if the above setting is not satisfactory, we move~$c_1$ one step towards her target face~$f_1 = f(\angle^u_1)$ and~$c_2,c_3$ both to~$f(\angle^v_2)$.
    Now, its the robber's turn again.
    If he does not move, we assign target faces $f_i = f(\angle^v_i)$, $i=1,2,3$, and the total distance decreases after the cops' next turn.
    If he moves back to~$u$, we assign target faces $f_i = f(\angle^u_i)$, $i=1,2,3$, and the total distance also decreases after the cops' next turn.
    The last possibility for the robber is to move towards another neighbor~$w$ of~$v$, see \cref{fig:upper_bound_max_degree3_angles}.
    Then, we assign~$f_1 = f(\angle^v_1)$ and~$f_2,f_3$ to be the faces containing the other two angles at~$w$.
    In their next turn, $c_2$ and~$c_3$ can again reach their target faces, while~$c_1$ decreases the distance to her target face~$f(\angle^v_1)$ by one compared to the initial situation with the robber at vertex~$u$.
    Again, the total distance is decreased, which concludes the proof.
\end{proof}

\section{Plane Graphs with \texorpdfstring{$\bm{\Delta = 4}$}{Maximum Degree 4}}

To prove the upper bound in Statement~\ref{enum:degree_4} in \cref{thm:main}, we reduce our Primal-Dual Cops and Robber to the classical Cops and Robber with cops on vertices of the dual graph~$G^*$ of $G=(V,E)$ and then use a result from the literature. 

\subsubsection*{Upper Bound}
We claim that for every plane graph $G$ of maximum degree~$4$, we have $c^*(G) \leq 6$, i.e., that six face-cops can always catch the robber.
As a warming up, we first discuss a simple strategy that shows~$c^*(G) \leq 12$ for any plane graph~$G$ with~$\Delta(G) = 4$ and also gives rise to a better strategy.   
Without loss of generality assume that~$G$ is a plane graph such that it contains only degree-$4$ vertices and degree-$1$ vertices (which again can be achieved by adding leaves to vertices not yet having the required degree).
Following the notation in the proof of \cref{thm:main} (Statement~\ref{enum:degree_3}), consider a vertex-cop~$c$ located at a degree-$4$ vertex $u \in V$ with four incident angles~$\angle^u_i$, $i \in \{1,2,3,4\}$, and place four face-cops on the relevant four faces~$f(\angle^u_i)$.

\begin{remark}
    \label{rem:face_cops_simulate_vertex_cop}
    If the vertex-cop~$c$ moves to an adjacent vertex~$v$, the four face-cops around her can in one step also move to faces containing the angles incident to~$v$ (see \cref{fig:upper_bound_max_degree4_angles} for an illustration). 
 \end{remark}

Consequently, in any plane graph~$G$ with~$\Delta(G) = 4$, four face-cops can simulate a vertex-cop.
Aigner and Fromme prove that~$c(G) \leq 3$ for all plane~$G$~\cite{Aigner1984_Planar3Cops}, which immediately results in $c^*(G) \leq 4 \cdot c(G) = 12$ in that class of graphs. 

\begin{figure}[tb]
    \centering
    \includegraphics{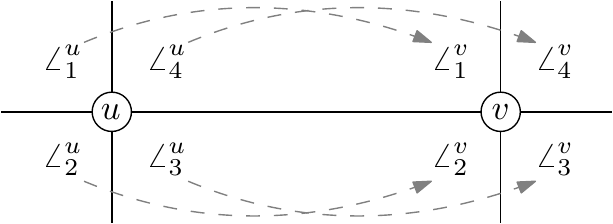}
    \caption{A vertex-cop and its four accompanying face-cops moving from~$u$ to~$v$.}
    \label{fig:upper_bound_max_degree4_angles}
\end{figure}

\medskip
But how about exploiting the fact that the dual graph~$G^*$ of~$G$ is planar?
Assume that the robber, initially located at vertex~$u$, with the relevant incident faces~$f(\angle^u_i)$, $i \in \{1,2,3,4\}$, is trying to avoid some six cops, say $c_1, \ldots, c_6$.
Let $U = \langle u_0=u, u_1, u_2, \ldots\rangle$ be the sequence of vertices in~$G$ visited by the robber.
Then, taking into account \cref{rem:face_cops_simulate_vertex_cop}, the sequence~$U$ defines the four sequences
\[
    F_i^* = \bigl\langle
        f^*(\angle_i^{u_0}),
        f^*(\angle_i^{u_1}),
        f^*(\angle_i^{u_2}),
        \ldots
    \bigr\rangle,
    \quad i \in \{1,2,3,4\}
\]
of vertices in the dual graph~$G^*$ of~$G$, corresponding to the four sequences 
\[
    F_i = \bigl\langle
        f(\angle_i^{u_0}),
        f(\angle_i^{u_1}),
        f(\angle_i^{u_2}),
        \ldots
    \bigr\rangle,
    \quad i \in \{1,2,3,4\}
\]
of faces incident to vertices~$u_j \in U$.  
Assume now that the cops' strategy is as follows:
First, cops~$c_1$, $c_2$ and~$c_3$, by playing together the Cops and Robber game in the dual graph~$G^*$ -- which is possible by \cref{rem:face_cops_simulate_vertex_cop} -- are trying to catch the imaginary robber~$r_1$ moving in~$G^*$ with respect to the sequence~$F^*_1$ (so cops~$c_4$, $c_5$ and~$c_6$ pause during this phase).

Clearly, by applying the strategy in~\cite{Aigner1984_Planar3Cops} for planar graphs, they will succeed in a finite number of steps.
Assume that the cop~$c_1$ is the one that catches the imaginary robber at vertex~$f^*_1(\angle_1^{u_{t_1}})$ at time moment~$t_1$.
Then, again taking into account \cref{rem:face_cops_simulate_vertex_cop}, the cop~$c_1$ can \emph{escort} (by being located at the same vertex in~$G^*$) the imaginary robber~$r_1$ forever.  
What do the remaining cops do next?
Cop~$c_4$ becomes activated and then cops~$c_2$, $c_3$ and~$c_4$, by applying the same strategy, catch the imaginary robber~$r_2$ in~$G^*$ which is (and has been) moving with respect to the sequence~$F^*_2$.
Again, after a finite number of steps, say at time moment~$t_2$, the face~$f(\angle_2^{u_{t_2}})$ becomes occupied by one of these cops, say~$c_2$.
Consequently, at time moment~$t_2$, while the (real) robber is located at vertex~$u_{t_2}$, the face(s)~$f(\angle_1^{u_{t_2}})$ and~$f(\angle_2^{u_{t_2}})$ are occupied by cops~$c_1$ and~$c_2$, respectively, which then can escort the imaginary robbers~$r_1$ and~$r_2$ forever (by \cref{rem:face_cops_simulate_vertex_cop}).

Observe now that, by activating cop~$c_5$ and eventually cop~$c_6$, again by applying the same strategy, all faces~$f_i(\angle_1^{u_{t_4}})$, $i \in \{1,2,3,4\}$, incident to the (real) robber will become occupied by four cops at some time moment~$t_4$, thus ending the game, which immediately results in~$c^*(G) \leq 6$ as required.

\subsubsection*{Lower Bound}

Consider a more restrictive variant of the classical Cops and Robber game in which both, cops and robber, play again on vertices and move along edges, and the robber is caught on~a~vertex~$v$ if and only if there is (at least) one cop on (at least) one neighbor~$u$ of~$v$.
So, while in the classical Cops and Robber game the robber must (in order to win) maintain distance at least~$1$ to all cops at all times, in the new variant we consider now, the robber must (in order to win) maintain distance at least~$2$ to all cops at all times.

It turns out, that in this variant played on a planar graph, also three cops are sometimes needed\footnote{
    Three cops are also sufficient, but we do not need this here.
}.
As in the classical game, a lower bound example can be derived from the dodecahedron graph~$D$.
In preparation for later arguments, let us prove this in the following stricter and, unfortunately, quite technical, form.

\begin{lemma}
    \label{lem:5_subdivided_dodecahedron}
    Let $D_5 = (V,E)$ be the dodecahedron graph~$D$ with each edge subdivided by five degree-$2$ vertices, as depicted in \cref{fig:subdivided_dodecahedron} with its unique plane embedding.
    Let two cops and one robber be playing on the vertices of~$D_5$ and moving along edges of~$D_5$.

    Then there is a strategy for the robber to maintain at all times distance at least~$2$ to both cops on~$D_5$.
    This still holds under the additional restriction that each time the robber moves onto a degree-$3$ vertex~$v$, he must announce (before seeing the cops' next move) to which neighbor of~$v$ he moves in his next turn.
\end{lemma}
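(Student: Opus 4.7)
The plan is to simulate on~$D_5$ a classical two-cop evasion for the dodecahedron~$D$, exploiting the fact that subdivision inflates all inter-vertex distances by a factor of~$6$. This slack provides both the ``distance $\geq 2$'' margin (upgraded from the classical ``distance $\geq 1$'') and the one-turn penalty of the announcement constraint. First, I fix a classical robber strategy~$\sigma$ on~$D$ against two cops maintaining $D$-distance $\geq 1$, as guaranteed by Aigner--Fromme's~$c(D)=3$. Then I introduce a shadow map $s \colon V(D_5) \to V(D)$ sending each degree-$3$ vertex to its $D$-counterpart and each degree-$2$ vertex on the subdivided edge~$uw$ to the closer of~$u,w$ (with a fixed tiebreak at midpoints). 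Each real cop induces a virtual cop at its shadow; a direct check shows that a cop's shadow can change by at most one $D$-step over any six consecutive real moves, so six real cop moves correspond to at most one virtual cop move.

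The robber plays in \emph{phases}. A phase begins with the robber at a degree-$3$ vertex~$v$ satisfying the invariant that each shadow is at $D$-distance $\geq 1$ from~$v$. He consults~$\sigma$ to pick a safe neighbor $v' \in N_D(v)$, announces his commitment to head toward~$v'$, and traverses the length-$6$ subdivided path $v = p_0, \dots, p_6 = v'$ at one step per turn. Since the announcement rule binds only at degree-$3$ vertices, the robber may abort and retreat toward~$v$ from any intermediate~$p_j$ in response to adversarial cop motion, restarting a new phase at~$v$ if needed. The end-of-phase invariant at~$v'$ follows from~$\sigma$'s guarantee combined with the at-most-one-step shadow advance: if the robber reaches~$v'$, each shadow is at $D$-distance $\geq 1$ from~$v'$.

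The verifications split into two safety checks. \emph{End of phase:} a cop whose shadow is a $D$-neighbor of~$v'$ has real position at $D_5$-distance $\geq 3$ from~$v'$, well beyond the distance-$2$ threshold. \emph{Mid-phase:} any cop within real distance~$1$ of an intermediate~$p_j$ must have shadow in $\{v,v'\}$, contradicting the invariant at the closer phase endpoint. The main obstacle I anticipate is handling the announcement in concert with the distance-$2$ constraint: an adversarial cop may exploit the one-turn advance notice to push its shadow toward~$v'$. I expect to resolve this by combining two observations: (i)~if the cop's shadow initially sits on the announced direction~$v'$, then~$\sigma$ would not have selected~$v'$; and (ii)~if a cop's shadow moves into $\{v, v'\}$ mid-phase, the robber's retreat option returns him to~$v$ before the cop closes within distance~$2$. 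A finite case analysis, simplified by the vertex-transitive symmetry of~$D$, completes the argument.
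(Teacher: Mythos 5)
Your proposal takes a genuinely different route from the paper: you try to reduce to a black-box classical two-cop evasion strategy $\sigma$ on the dodecahedron $D$ via a shadow map, whereas the paper builds a direct, self-contained strategy on $D_5$ around an explicit notion of a \emph{safe} configuration (robber adjacent to a degree-$3$ vertex $v$, both cops at distance at least $4$ from $v$) and two quantitative facts: a cop threatening $v$ is at distance $9$ from two of the three degree-$3$ neighbors of $v$, and since every cycle of $D_5$ has length at least $30$, the second cop is far from at least one of those. Unfortunately, as written your reduction has genuine gaps. First, the velocity claim for shadows is false at the tiebreak boundaries: a cop sitting at the midpoint of the subdivided edge $uw$ (shadow $w$, say, by the tiebreak) can in exactly six moves cross $u$ and reach the midpoint of $uw'$ (shadow $w'$), shifting its shadow by \emph{two} $D$-steps in one simulated round, which desynchronizes the simulation. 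Second, your simulation replaces the strict alternation of the classical game by concurrency: the robber's single classical move and each cop's single classical move unfold over the same six real turns, and $\sigma$'s guarantee under alternation does not automatically transfer. Your mid-phase safety check invokes the invariant that no cop's shadow lies in $\{v,v'\}$, but that invariant only holds at the phase boundaries; a cop chasing from behind legitimately moves its shadow onto $v$ in the middle of the transit (and stays at real distance exactly $2$ throughout), so the claimed contradiction does not exist and the distance bound during the transit is simply not established.

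Third, the abort-and-retreat fallback is incompatible with the reduction it is supposed to rescue: a retreat costs the robber up to twelve real turns with no net shadow progress while each cop's shadow advances, which in the classical game amounts to the robber skipping a turn --- something an evasion strategy against two cops on $D$ is not guaranteed to survive. Finally, the interaction of the announcement rule with head-on approaches, which you correctly identify as the main obstacle, is deferred to two unverified observations and an unexecuted ``finite case analysis''; this is precisely the part of the argument where the paper does the actual work (choosing the flee target $w_1$ so that \emph{both} cops are at distance at least $9$ from it, using the girth bound, so that the one-move head start from the announcement is absorbed by the margin). In short, the skeleton is plausible, but the three load-bearing steps --- faithfulness of the shadow simulation, mid-transit safety, and existence of a safe flee direction under the announcement --- are each either incorrect as stated or missing.
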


\begin{figure}[tb]
    \centering
    \includegraphics{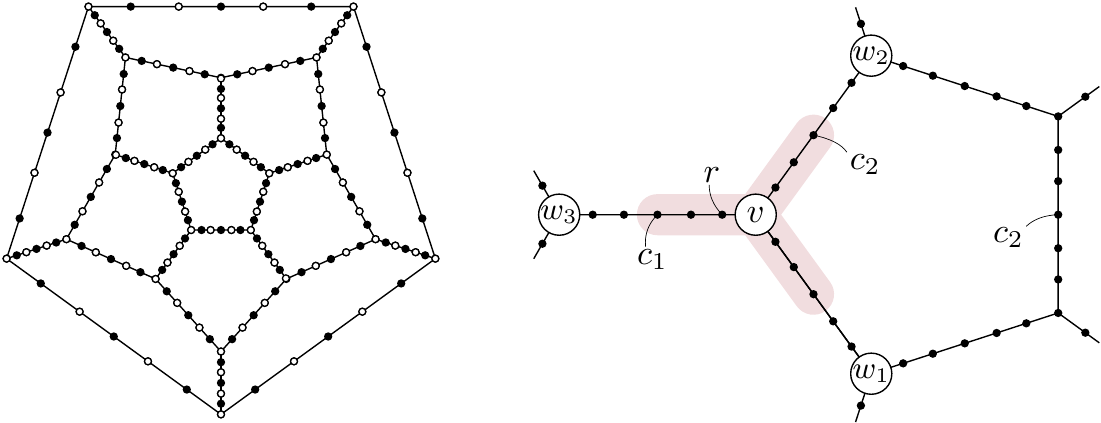}
    \caption{The $5$-times subdivided dodecahedron graph~$D_5$ (left) and an illustration of the strategy in \cref{lem:5_subdivided_dodecahedron} (right) with two exemplary positions for the cop $c_2$ at distance~$9$ to $w_1$.}
    \label{fig:subdivided_dodecahedron}
\end{figure}

\begin{proof}
    Let $V_3 = \{v \in V \mid \deg(v) = 3\}$ be the set of the~twenty degree-$3$ vertices in~$D_5$.
    Consider now a vertex $v \in V_3$.
    We say that a situation is \emph{safe around~$v$} if: 
    \begin{itemize}
        \item the robber has distance exactly~$1$ to~$v$ (i.e., occupies a neighbor of~$v$) and
        \item both cops have distance at least~$4$ to~$v$.
    \end{itemize}

    Our strategy for the robber goes as follows.
    After the initial placement of the two cops, the robber chooses his starting vertex to be a~neighbor, say~$s_v$, of some $v \in V_3$ to which both cops have distance at least~$4$, which makes him safe around~$v$\footnote{
        This can be done since each cop can have distance at most~$4$ to at most two vertices in~$V_3$.
    }.
    Then, the robber stays motionless at the vertex~$s_v$ as long as the situation is safe around~$v$.
    Otherwise, if at least one of the cops, say~$c_1$, enters a vertex at distance exactly~$3$ to~$v$, then the robber must react.
    He considers the three vertices $w_1,w_2,w_3 \in V_3$ at distance~$6$ to~$v$.
    Note that~$c_1$ has distance~$9$ to two of these, say~$w_1$ and~$w_2$.
    See the right of \cref{fig:subdivided_dodecahedron} for an illustration.
    Moreover, as each cycle in~$D_5$ is of the length at least~$30$, the other cop~$c_2$ has distance at least~$9$ to at least one of $w_1,w_2$, say to~$w_1$.

    Now, the robber moves from his current position at~$s_v$ to the closest vertex, say~$s_{w_1}$, in the neighborhood of~$w_1$.
    Clearly, going from~$s_v$ to~$s_{w_1}$ takes either four or six turns.
    In case the first step brings him to~$v$, the robber also announces his follow-up move, as required in the statement of the lemma.
    It follows from the choice of~$w_1$ that on the robber's way to~$s_{w_1}$, both cops have distance at least~$4$ to the robber.
    In addition, again by the choice of~$w_1$ and the fact that the cops made at most five steps and its their turn now, the situation is safe around~$w_1$ (for the robber at~$s_{w_1}$), which immediately completes the proof.
\end{proof}

Based upon~\cref{lem:5_subdivided_dodecahedron}, our purpose is to construct now a plane graph~$G$ of maximum degree $\Delta(G) = 4$ in which five face-cops are not enough to capture the robber, i.e., with $c^*(G) \geq 6$.
Before we start, we need one definition.
Let $H =( V,E)$ be a plane graph and consider a vertex $v \in V$.
Further, let the neighbors of~$v$ be ordered with respect to the plane embedding of~$H$, say $N(v) = (v_1, \ldots, v_k)$, where $k = \deg_H(v)$.
Then, the \emph{degree-based split of $v$} in~$G$ is the graph resulting from~$G$ by deleting~$v$, together with its incident edges, and adding~$k$ new vertices $u_1, \ldots, u_k$, together with the edges $\{v_1, u_1\}, \{u_1,v_2\}, \{v_2, u_2\}, \{u_2,v_3\}, \ldots, \{v_k, u_k\}, \{u_k,v_1\}$ forming a cycle of length $2k$.
See the top-right of \cref{fig:lower_bound_4_regular} for an illustration.

So let~$D$ be the dodecahedron graph and consider its plane subdivision~$D_5$ as in \cref{lem:5_subdivided_dodecahedron}.
Note that~$D_5$ is bipartite with one bipartition class~$A$ consisting of the set~$V_3$ of all degree-$3$ vertices of~$D_5$ together with two subdivision vertices per edge of the dodecahedron, and the other bipartition class~$B$ consisting of three subdivision vertices per edge of the dodecahedron, including all neighbors of vertices in~$V_3$.
Now, we construct our graph~$G$ from~$D_5$ as follows (see \cref{fig:lower_bound_4_regular} for an illustration). 
We first apply the degree-based split operation to all vertices $v \in A$.
Notice that since~$D_5$ is bipartite (in particular, $A$ is an independent set), the resulting graph~$G_0$ is unique, that is, the order in which we apply the split operations does not matter.
We then embed~$G_0$ in the plane, \enquote{inheriting} the embedding of~$D_5$, again see~\cref{fig:lower_bound_4_regular} for an illustration (here and there, we omit a formal construction since its description is tedious while we find it fairly enough illustrating it with a figure). 

It follows from the construction that for each face~$f$ of~$D$, the graph~$G_0$ contains a $30$-cycle~$C_f$ alternating between degree-$2$ vertices (corresponding to new vertices) and degree-$4$ vertices (corresponding to vertices in~$B$).
Now, for each such cycle~$C_f$, the next step is to connect (in a crossing-free way) its fifteen degree-$2$ vertices in order to create another~$30$-cycle, first turning these into fifteen degree-$4$ vertices and connecting them consecutively by adding new edges, and then subdividing each of these edges exactly once, thus creating fifteen new degree-$2$ vertices.
See the bottom-right of \cref{fig:lower_bound_4_regular}.
Starting with the new $30$-cycle, we repeat this step many times for each face~$f$ of~$D_5$ -- say~$100$ times, which is more than enough.
In the resulting graph~$G$, there is exactly one facial $30$-cycle for each face of~$D$.
We call these twelve faces of~$G$ the \emph{holes}. There are also twenty faces of length~$6$ in~$G$; one for each vertex of the dodecahedron $D$.
All remaining faces in~$G$ have length~$4$, see again \cref{fig:lower_bound_4_regular} for an illustration of the resulting graph~$G$.

\begin{figure}[tb]
    \centering
    \includegraphics[page=6]{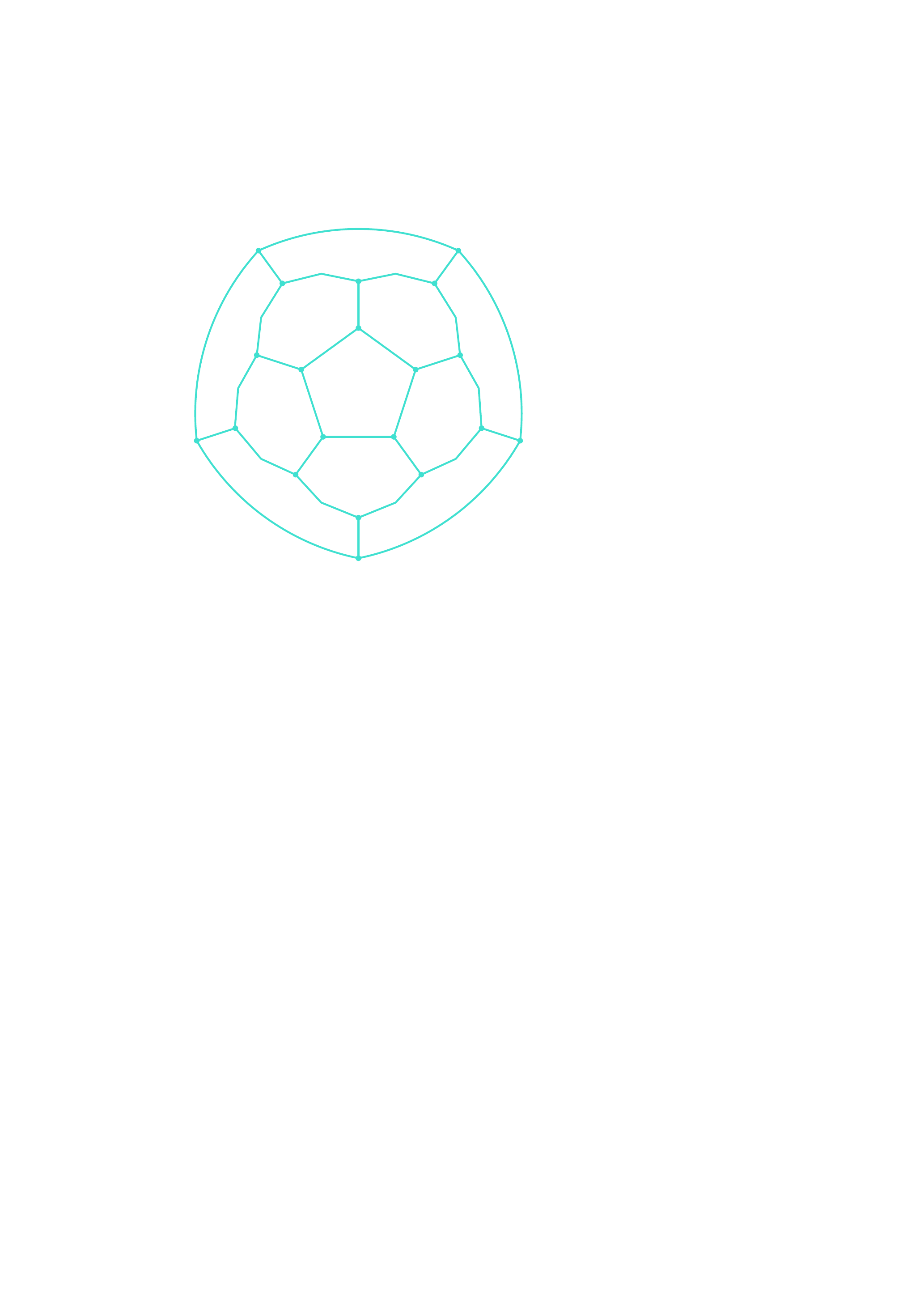}
    \caption{
        Left: A portion of the $4$-regular planar graph~$G$ based on~$D_5$.
        For the sake of readability only some $30$-cycles are shown in the faces of $D$.
        The subgraph~$G_0$ is highlighted in thick.
        Top-right: The degree-based split of $v$.
        Bottom-right: Adjoining a $30$-cycle to a $30$-cycle.}
    \label{fig:lower_bound_4_regular}
\end{figure}

To argue that~$c^*(G) > 5$, the robber pursues the following strategy against five face-cops.
He restricts himself to~$G_0$ only, focuses on just two of the cops~$c_1$ and~$c_2$, and ensures that neither~$c_1$ nor~$c_2$ ever gets on a face of~$G$ incident to the robber's current position.
As each vertex of~$G_0$ has four (pairwise distinct) incident faces, the three cops different from $c_1,c_2$ can never surround the robber and hence can be safely ignored.

In order to argue that the robber can maintain some distance to~$c_1$ and~$c_2$, we interpret any state of the game on~$G$ as a game on~$D_5$ in which the robber and the two cops $c_1,c_2$ occupy vertices of~$D_5$.

To be more precise, the robber moves on vertices of~$G_0$, each of which either corresponds directly to a vertex~$b \in B$ in~$D_5$ or to a vertex resulting from the degree-based split of some vertex $a \in A$ in~$D_5$.
On the other hand, the cops move on faces of~$G$:
\begin{itemize}
    \item The twenty $6$-faces correspond directly to vertices of degree~$3$ in~$D_5$.
    \item For each $4$-face, there is a well-defined corresponding vertex in~$D_5$ -- as indicated by the colors in \cref{fig:lower_bound_4_regular}-- such that for every pair of adjacent $4$- or $6$-faces in~$G$, the corresponding vertices in~$D_5$ form an edge.
    \item Only the holes have no corresponding vertex in~$D_5$.
\end{itemize}

We now observe that a vertex of~$G_0$ is incident to a $4$- or $6$-face in~$G$ only if the corresponding vertices in~$D_5$ have distance at most~$1$.
Thus, the strategy in \cref{lem:5_subdivided_dodecahedron} for the robber in~$D_5$ to maintain distance at least~$2$ to any two vertex-cops, provides a strategy for the robber in~$G$ to maintain distance at least~$2$ to any two face-cops, \emph{provided} the face-cops do not enter any hole\footnote{
    Note that it was crucial in \cref{lem:5_subdivided_dodecahedron} that the robber must announce his moves along a degree-$3$ vertex in~$D_5$, because correspondingly in~$G_0$ he moves to one of two possible vertices of the $6$-face.
}.
However, \emph{if} some face-cop, say~$c_1$, enters the hole in some face~$f$ of~$D_5$, it takes her at least~$100$ turns to reach again any face of~$G$ incident to a vertex in~$G_0$.
It is then easy for the robber to ignore~$c_1$ for~$100$ turns and to move, while maintaining distance at least~$2$ to~$c_2$, to a part of~$G_0$ \enquote{far away from~$f$}; for example to a part of~$G_0$ corresponding to a face of~$D_5$ that is non-adjacent to~$f$.

This proves that $c^*(G) > 5$ and therefore the tightness of Statement~\ref{enum:degree_4} in \cref{thm:main}.

\section{Plane Graphs with \texorpdfstring{$\bm{\Delta = 5}$}{Maximum Degree 5}}

In this section we prove Statement~\ref{enum:degree_5} in~\cref{thm:main}, i.e., that~$c^*(G) = \Omega\bigl(\sqrt{\log n}\bigr)$ for infinitely many $n$-vertex plane graphs~$G$ with~$\Delta(G) = 5$.
We utilize a result of Fomin et~al.~\cite{Fomin2010_FastRobber} about the cop number~$c_{p,q}(G)$ for a different variant of Cops and Robber for any graph~$G$ and positive integers~$p$ and~$q$.
Here (as in the classical variant) the cops and the robber are on the vertices of~$G$.
However, in each turn the cops may traverse up to~$p$ edges of~$G$, while the robber may traverse up to~$q$ edges of~$G$.
We refer to~$p$ and~$q$ as the \emph{velocities} of the cops and the robber, respectively.

\begin{theorem}[\cite{Fomin2010_FastRobber}]
    \label{thm:fast_robber}
    Let~$G_n$ be the $n \times n$ grid graph, $p$ be the velocity of the cops and~$q$ be the velocity of the robber.
    If~$p < q$, then~$c_{p,q}(G_n) = \Omega\bigl(\sqrt{\log n}\bigr)$.
\end{theorem}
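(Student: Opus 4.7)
The plan is to exhibit a survival strategy for the robber on $G_n$ against any $k = o(\sqrt{\log n})$ cops, assuming $p < q$. Because $q > p$, a robber that retreats along a straight line from a single pursuing cop gains $q - p \geq 1$ units of graph distance per turn, so an isolated cop can never catch a robber who has enough room to run. The difficulty is that many cops can cooperate to cut off retreat directions, and the task is to quantify how much cooperation is needed.

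The robber plays to maintain the invariant that he sits in the interior of some axis-aligned subgrid $H \subseteq G_n$ of side length $s$ such that every cop is at distance at least $\rho s$ from the center of $H$, for a suitable constant $\rho$. While this invariant holds, the robber roams inside $H$ and is threatened only when a cop approaches the boundary of $H$. Using the speed advantage, I would show by a direct chase argument that the robber can parry a single approaching cop by drifting toward the opposite side of $H$ at net speed at least $q-p$, preserving the invariant at scale $s$. The robber is forced to switch to a smaller subgrid $H' \subset H$ of side length $s' = s/2$ only when several cops approach from different sides simultaneously.

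The key quantitative step is to bound how many cops are required to force one such ``scale reduction.'' A boundary of length $4s$ can only be fully sealed against a robber of speed $q$ by $\Omega(s)$ cops of speed $p < q$, but to merely force a halving of $s$ it suffices to cover a constant fraction of the perimeter, which costs a number of cops growing with how much prior progress the cops have already locked in. Writing $m(s)$ for the number of cops needed to shrink the invariant subgrid from side $n$ down to side $s$, I expect a recurrence of the form $m(s)^2 \geq m(2s)^2 + \Omega(1)$, whose solution is $m(1) = \Omega(\sqrt{\log n})$; equivalently, if there are only $k$ cops in total, then after at most $k^2$ forced retreats the robber still occupies a subgrid of side $n \cdot 2^{-O(k^2)}$, which is nonempty whenever $k = o(\sqrt{\log n})$, so the robber survives indefinitely.

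The main obstacle is the amortization: cops can reposition cops that were committed to earlier barriers and reuse them at smaller scales, so the recurrence must be proven against an adversarial cop schedule rather than a greedy one. This is where the speed gap $q > p$ enters quantitatively, since any cop that leaves its barrier to help elsewhere loses ground at rate $q-p$ relative to the robber, giving him enough time to re-enlarge the invariant subgrid. Making this trade-off rigorous — most naturally via a weighted potential function that assigns each cop a contribution depending on its distance to the robber and to the current subgrid boundary, and showing the potential cannot reach the ``capture'' threshold in finitely many turns unless $k^2 = \Omega(\log n)$ — is the technical heart of the argument carried out in Fomin et al.~\cite{Fomin2010_FastRobber}.
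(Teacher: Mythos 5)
The paper does not prove this statement at all: it is imported verbatim as a black-box result of Fomin et al.~\cite{Fomin2010_FastRobber}, so there is no in-paper argument to compare yours against. Judged on its own terms, your proposal is a proof \emph{plan} rather than a proof, and the plan leaves exactly the hard parts open. The qualitative observations are fine (a lone slower cop can never catch a faster robber with room to run; the danger comes only from coordinated barriers), and the target recurrence $m(s)^2 \geq m(2s)^2 + \Omega(1)$ is indeed the right shape to yield $\Omega\bigl(\sqrt{\log n}\bigr)$. But the two steps that would make this a theorem are asserted, not established: (i) the claim that forcing a single halving of the safe subgrid costs an \emph{increasing} number of additional cops --- you never actually derive why the increment in $m(\cdot)^2$ is bounded below by a constant, as opposed to, say, a constant increment in $m(\cdot)$ itself, which would only give $\Omega(\log n)$ levels survivable with $O(\log n)$ cops and hence no bound of the claimed form; and (ii) the amortization against cops who abandon one barrier to reinforce another, which you correctly identify as the crux and then explicitly defer to the cited paper. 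A potential-function argument is named but not defined, so there is no way to check that the ``capture threshold'' genuinely requires $k^2 = \Omega(\log n)$. As it stands the proposal would not be accepted as a proof; it is an accurate reconstruction of the intuition behind the Fomin et al.\ bound with the quantitative core missing.
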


The idea to prove Statement~\ref{enum:degree_5} in \cref{thm:main} is to construct a \enquote{grid-like} graph~$G_{n,s,r}$ for positive integers $n,s,r$ in which the robber in the primal-dual variant can move around faster than the cops.
Then he can simulate the evasion strategy of the robber in the variant of Fomin et al.~\cite{Fomin2010_FastRobber}.

We refer to \cref{fig:lower_bound_grid} for an illustration of the upcoming construction.
We start with the~$n \times n$ grid graph~$G_n$, $n \geq 3$, with a planar embedding such that the $4$-faces are the inner faces.
The outer face of $G_n$ has length $4(n-1)$.
We call the vertices of~$G_n$ the \emph{grid vertices}.
First, each edge of~$G_n$ is subdivided by~$2s$ new vertices, called \emph{subdivision vertices}, which results in the graph~$G_{n,s}$.
The inner faces of~$G_{n,s}$ have~$4\cdot 2s$ subdivision vertices on its boundary, while the outer face~$f_0$ has~$4(n-1)\cdot2s$. 
Next, inside each face~$f$ in~$G_{n,s}$ with $x$ subdivision vertices, we add~$r$ nested cycles (also nesting the boundary cycle of~$f$), called \emph{rings}, of length~$\frac32x$ each and call their vertices the \emph{ring vertices}.
Between any two consecutive rings we add a planar matching of~$\frac32x$ edges.
The \emph{closest ring} of an inner face~$f$ of~$G_{n,s}$ is the outermost ring in $f$; for the outer face~$f_0$ it is the innermost ring in~$f_0$.
At last, we add edges in a crossing-free way between each subdivision vertex~$v$ and (in total three) ring vertices on the closest ring in the two faces incident to~$v$ in~$G_{n,s}$ in such a way that:
\begin{itemize}
    \item each ring vertex on a closest ring has exactly one edge to a subdivision vertex,
    \item each subdivision vertex has two edges to ring vertices in one incident face of~$G_{n,s}$ and one edge to a ring vertex in the other incident face of~$G_{n,s}$, whilst
    \item the side with two edges to ring vertices always switches along each subdivision path.
\end{itemize}
In total, each subdivision vertex receives three edges, and each face of~$G_{n,s}$ with $x$ subdivision vertices receives $\frac32x$ edges which are connected to the~$\frac32x$ vertices of the closest ring.

\begin{figure}[tb]
    \centering
    \includegraphics[page=2]{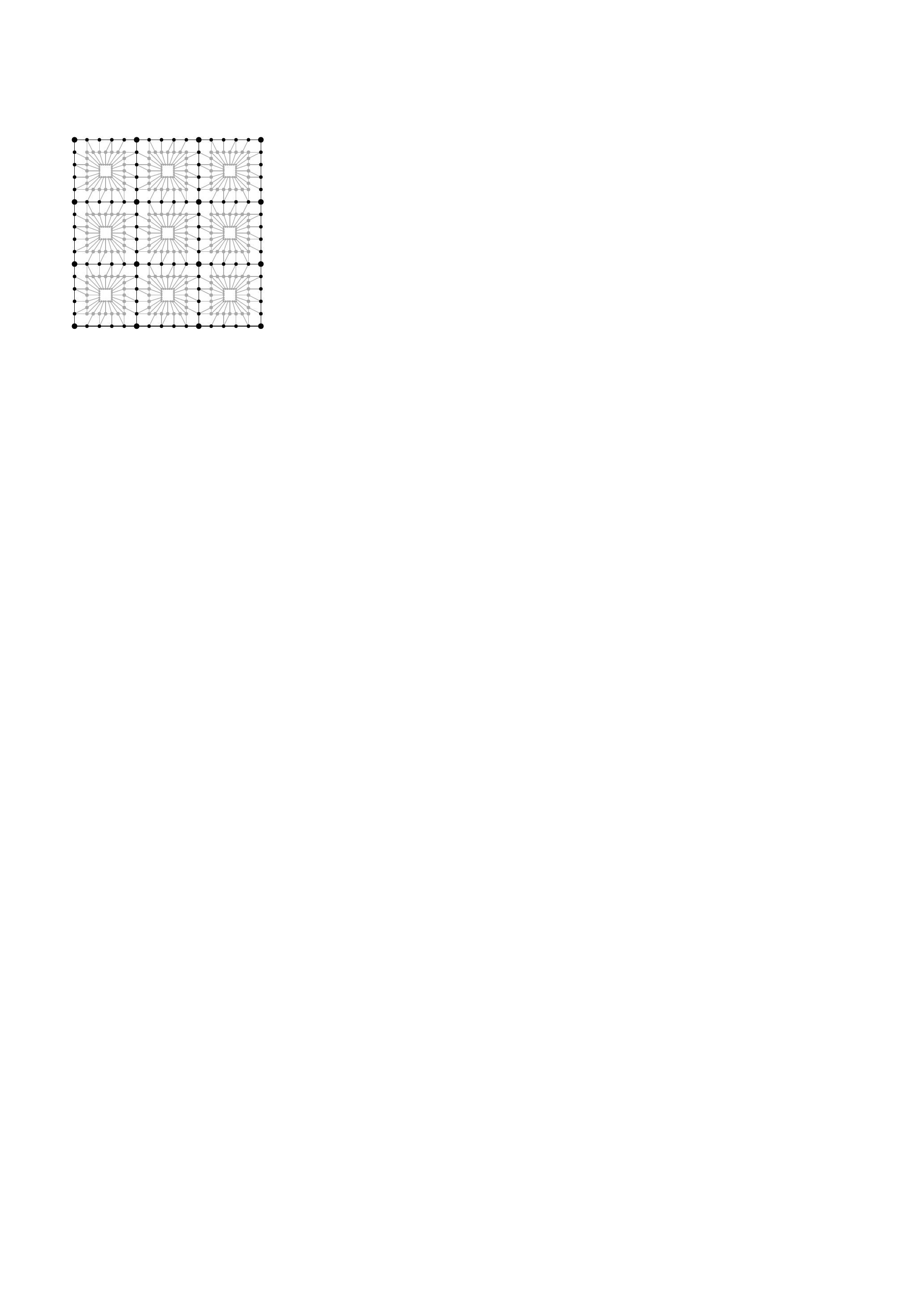}
    \caption{
        $G_{4,2,2}$: A $4 \times 4$ grid with each edge subdivided four times and two rings.
        Faces are colored according to their closest grid vertex.
        Deep and shallow faces are light and dark, respectively.
    }
    \label{fig:lower_bound_grid}
\end{figure}

Call the resulting graph~$G_{n,s,r}$ and note that~$\Delta(G_{n,s,r}) = 5$.
See again \cref{fig:lower_bound_grid} for an illustrating example.
We shall use a robber strategy in which he only focuses on grid vertices of~$G_{n,s,r}$ and moves between these through the paths of subdivision vertices, i.e., only plays on~$G_{n,s}$.
The purpose of the additional rings in~$G_{n,s,r}$ is to slow down the cops and force them to stay close to grid and subdivision vertices, too, thereby simulating the game of Fomin et al.~\cite{Fomin2010_FastRobber}~on~$G_n$.

Formally, we call a face of~$G_{n,s,r}$ \emph{shallow} if it is incident to some subdivision vertex, and \emph{deep} otherwise.
\Cref{lem:cop_moves_along_shallow_faces} below implies that, due to the number of rings, cops should not use deep faces.  

\begin{lemma}
    \label{lem:cop_moves_along_shallow_faces}
    Let~$a_1,a_2$ be two shallow faces of~$G_{n,s,r}$ inside the same face~$f$ of~$G_{n,s}$.
    If~$r > 3s(n-1)$, then any cop moving from~$a_1$ to~$a_2$ along a shortest path without leaving~$f$ uses only shallow faces.
\end{lemma}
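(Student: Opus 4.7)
The plan is to view the dual graph restricted to the interior of $f$ as a topological cylinder with a single \emph{cap} attached at the far end, and to show that under $r > 3s(n-1)$ the cap can never lie on a shortest path between two shallow faces.

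First, I would organise the faces of $G_{n,s,r}$ lying inside $f$ into layers $L_0, L_1, \ldots, L_r$: layer $L_0$ is the shallow layer between the subdivision boundary of $f$ and the closest ring; $L_i$ for $1 \leq i \leq r-1$ is the annular layer between the $i$-th and $(i{+}1)$-th ring counted from the boundary; and $L_r$ is the single cap face enclosed by the innermost ring. Writing $m := \frac{3}{2}x$, a direct inspection of the construction shows that each $L_i$ with $0 \leq i \leq r-1$ consists of exactly $m$ faces arranged as a cycle in the dual (via connection edges for $L_0$ and via matching edges for each $L_i$ with $i \geq 1$), and that dual adjacencies between consecutive non-cap layers form a perfect matching (via ring edges). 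Planarity forces these matchings to be cyclic shifts, so one can consistently assign an \emph{angular position} in $\mathbb{Z}/m$ to every face in $L_0 \cup \cdots \cup L_{r-1}$ such that radial dual edges preserve the position while angular edges change it by $\pm 1$. The cap $L_r$, by contrast, is a single dual vertex adjacent to all $m$ faces of $L_{r-1}$.

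Next, I classify the steps of any cop path $P$ in the dual from $a_1$ to $a_2$ inside $f$ as \emph{radial} (changing layer among $L_0, \ldots, L_{r-1}$), \emph{angular} (staying in the same $L_i$ with $i < r$), or \emph{cap steps} (incident to $L_r$). Let $d$ denote the cyclic distance between the angular positions of $a_1$ and $a_2$; then $d \leq m/2 = \frac{3x}{4} \leq 6s(n-1)$, since every face $f$ of $G_{n,s}$ satisfies $x \leq 8s(n-1)$. If $P$ never enters the cap and visits a deepest layer $L_K$, then $P$ uses at least $2K$ radial steps plus at least $d$ angular steps (to realise the required net angular displacement), giving $|P| \geq d + 2K$. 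If $P$ does enter the cap, it must radially descend from $L_0$ to $L_r$ and subsequently return, costing at least $2r$ steps in total.

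Finally, the hypothesis $r > 3s(n-1)$ yields $2r > 6s(n-1) \geq d$, so any cap-using path is strictly longer than a shallow-only path of length~$d$, while any non-cap path that ever leaves $L_0$ has length at least $d + 2 > d$. Consequently every shortest path from $a_1$ to $a_2$ inside $f$ stays entirely within $L_0$, that is, uses only shallow faces. The main obstacle I foresee is the structural setup, in particular checking that the shallow layer really forms a single dual cycle of length $m$ given the \enquote{alternating sides} rule of the construction, and that the cross-layer matchings are coherent cyclic shifts so that angular positions can be defined consistently across all layers; once that bookkeeping is settled, the counting argument is immediate from $r > 3s(n-1)$.
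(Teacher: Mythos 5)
Your proof is correct and follows essentially the same route as the paper: the paper identifies the dual of the interior of $f$ minus the central deep face with the cylinder grid $P_r\,\square\,C_{3x/2}$, excludes the central face by the same distance count $2r > \frac{3}{4}x$, and concludes that shortest paths between boundary (shallow) faces stay on the boundary cycle. Your layer/angular-position bookkeeping is just a hands-on verification of that product structure, so no substantive difference.
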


\begin{proof}
    Let $x$ be the number of subdivision vertices on the boundary of~$f$.
    In particular,~$x \leq 4(n-1)\cdot 2s$.
    There are exactly~$\frac32x$ shallow faces inside~$f$; one for each edge of the closest ring in~$f$.
    Hence, the cop may move from~$a_1$ to~$a_2$ using only shallow faces in no more than~$\frac34x$ steps.
    On the other hand, the deep face~$b$ of length~$\frac32x$ is at distance~$r > 3s(n-1) \geq \frac38x$ from each of~$a_1,a_2$ and hence no shortest path between~$a_1$ and~$a_2$ uses~$b$.

    Let~$H$ be the subgraph of the plane dual of~$G_{n,s,r}$ induced by all inner faces inside~$f$, except~$b$.
    Then~$H \cong P_{r} \ \square\ C_{\frac32x}$ is a square grid on a cylinder of height~$r$ and circumference~$\frac32x$, with the shallow faces forming a boundary cycle~$C$.
    Since~$a_1,a_2$ are on~$C$ and each shortest path lies inside~$H$, such path is contained in~$C$, i.e., uses only shallow faces.
\end{proof}

Let $F$ be the set of all faces of~$G_{n,s,r}$.
For a face~$f \in F$, we denote by~$v_f$ the grid vertex closest to~$f$, breaking ties arbitrarily.

\begin{lemma}
    \label{lem:cop_moves_along_grid}
    Let~$a,b$ be two shallow faces whose closest grid vertices~$v_a,v_b$ have distance~$d$ in~$G_n$.
    If~$r > 3s(n-1)$, then in~$G_{n,s,r}$ the robber moving from~$v_a$ to~$v_b$ needs at most~$(2s+1)d$ steps, while any cop moving from~$a$ to~$b$ needs at least~$3s(d-2)$ steps.
\end{lemma}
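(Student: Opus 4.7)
For the robber's bound, each edge of $G_n$ corresponds to a subdivision path of length $2s+1$ in $G_{n,s}\subseteq G_{n,s,r}$, so walking along a shortest $v_a$-$v_b$ path of $G_n$ yields a walk of $(2s+1)d$ edges in $G_{n,s,r}$.

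For the cop's bound, let $P=(a=f_0,\dots,f_L=b)$ be a shortest path in the dual $G^*_{n,s,r}$. By \cref{lem:cop_moves_along_shallow_faces} (using $r>3s(n-1)$), every $f_i$ is a shallow face. The plan is to construct a graph homomorphism $\phi$ from the subgraph of $G^*_{n,s,r}$ induced by the shallow faces to an auxiliary graph $H$ in which distances are easy to compute. Take $H$ to be the graph obtained from $G_n$ by subdividing every edge with exactly $3s-1$ new vertices, so that each $G_n$-edge becomes a path of length $3s$ in $H$. The key structural observation is that inside every face $F$ of $G_{n,s}$ the shallow faces form a cycle in the restricted dual, bijectively matched with the $\frac{3}{2}x$ edges of the closest ring of $F$ (where $x$ is the number of subdivision vertices on the boundary of $F$); consecutive shallow faces in the cycle share a subdivision-to-ring edge. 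This cycle contains exactly one \emph{corner} shallow face at each grid vertex of $F$, and between two consecutive corner faces (corresponding to one side of a $G_n$-edge $vw$ bounding $F$) it carries exactly $3s-1$ non-corner shallow faces. Define $\phi$ by sending each corner face $F_v^F$ to $v\in V(H)$ and sending the $i$-th non-corner shallow face on the arc for $vw$ on side $F$ (counted from $v$) to the $i$-th subdivision vertex on the $H$-path replacing $vw$.

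Verifying that $\phi$ is a homomorphism reduces to three local checks: consecutive shallow faces on the same arc map to consecutive vertices on the corresponding $H$-path; a corner face $F_v^F$ is dual-adjacent to the arc-endpoint non-corner face and its image $v$ is adjacent in $H$ to the first subdivision vertex of the corresponding $H$-path; and two shallow faces on opposite sides of a $G_n$-edge that share a subdivision-path edge receive $\phi$-values at $H$-distance at most one, which follows from a short bookkeeping of the alternating two-up/one-up pattern of subdivision-to-ring edges along the subdivision path. Given the homomorphism, $L\geq d_H(\phi(a),\phi(b))$. Combining $d_H(v_a,v_b)=3sd$ with the fact that $\phi(a)$ and $\phi(b)$ lie at $H$-distance at most $\lceil 3s/2\rceil$ from $v_a$ and $v_b$ respectively---because the closest-grid-vertex convention for $a$ and $b$ forces them to sit on the $v_a$- and $v_b$-half of their respective $G_n$-edges---the triangle inequality yields $L\geq 3sd-3s\geq 3s(d-2)$. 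The main obstacle is the cross-side local check, where the two-up/one-up pattern shifts by one position along the subdivision path, making the indices of the two matched shallow faces differ; the remaining steps are essentially forced once this bookkeeping is fixed.
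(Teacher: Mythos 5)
Your robber bound is exactly the paper's: walk the subdivision paths, $2s+1$ steps per grid edge. For the cop bound, you and the paper exploit the same structural fact, namely that along each side of a subdivided grid edge there are exactly $3s$ shallow faces, so that the shallow part of the dual is (essentially) a $3s$-subdivided copy of $G_n$; but you package it differently. The paper counts, along the cop's path, the number of transitions between shallow faces with distinct closest grid vertices (at least $d-2$ of them) and asserts that each transition costs $3s$ dual steps; you instead build an explicit homomorphism $\phi$ from the shallow dual into the $3s$-subdivided grid $H$ and conclude via $d_{G^*_{n,s,r}}(a,b)\ge d_H(\phi(a),\phi(b))\ge 3sd-2\lceil 3s/2\rceil\ge 3s(d-2)$. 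Your route costs more setup --- the cross-side adjacency check with the alternating two-up/one-up pattern is exactly where the paper's one-line claim \enquote{each such transition requires $3s$ moves} gets its justification --- but in exchange it makes that claim rigorous rather than asserted. Both arguments land on the same constant, and the corner/arc counting ($3s$ shallow faces per edge side, one corner face per grid vertex per incident face of $G_{n,s}$, hence $3s-1$ non-corner faces per arc) checks out.

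One step you should tighten: \cref{lem:cop_moves_along_shallow_faces} only concerns shortest paths between two shallow faces that stay \emph{inside a single face of $G_{n,s}$}, so you cannot invoke it directly to conclude that every $f_i$ on a globally shortest $a$--$b$ path is shallow. You first need to split the path into maximal segments each contained in one face of $G_{n,s}$, note that the faces at which the path crosses from one face of $G_{n,s}$ to another are necessarily shallow (only subdivision-path edges separate distinct faces of $G_{n,s}$, so only faces incident to subdivision vertices have dual neighbors in another face of $G_{n,s}$), observe that each segment is a shortest within-face path between shallow faces, and only then apply the lemma segment by segment. The paper does essentially this with its \enquote{cutting off} step. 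This is a fixable bookkeeping omission, not a flaw in the approach.
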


\begin{proof}
    For the first part it is enough to observe that the robber may go along subdivision vertices, taking him exactly~$2s+1$ steps for every corresponding edge in~$G_n$.

    For the second part, i.e., the lower bound on the number of moves for a cop, let~$A$ and~$B$ be the inner faces of~$G_n$ containing the inner faces~$a$ and~$b$ of~$G_{n,s,r}$, respectively.
    We assume that~$d \geq 3$, as otherwise~$3s(d-2) \leq 0$ and there is nothing to show, and hence we have~$v_a \neq v_b$.
    More precisely, traveling from~$a$ to~$b$, the cop must traverse inner faces of~$G_{n,s,r}$ with at least~$d+1$ pairwise different closest grid vertices of~$G_n$.
    Cutting off the initial part with closest grid vertex~$v_a$ and final part with closest grid vertex~$v_b$, \cref{lem:cop_moves_along_shallow_faces} implies that the remaining shortest path for the cop uses only shallow faces.
    Thus, on her way, the cop visits shallow faces incident to at least~$d-1$ distinct grid vertices, i.e., $d-2$ transitions from a shallow face at a grid vertex to a shallow face at a neighboring (in $G_n$) grid vertex.
    As each such transition requires~$3s$ moves, the claim follows.
\end{proof}

\begin{proof}[Proof of Statement~\ref{enum:degree_5} in \cref{thm:main}]
    Fomin et al.~\cite{Fomin2010_FastRobber} describe an evasion strategy for a~robber with velocity~$q$ that requires~$\Omega\bigl(\sqrt{\log n}\bigr)$ vertex-cops with velocity~$p$ to capture him in~$G_n$, provided $q > p$; see \cref{thm:fast_robber}.
    In the following, we describe how a robber with velocity~$1$ in~$G_{n,s,r}$ (for sufficiently large~$n, s, r$) can simulate this strategy against face-cops with velocity~$1$.

    We choose~$p = 7$,~$q = 8$ and consider the game of Fomin et al.\ for these velocities.
    For their graph~$G_n$ in which the robber can win against $k = \Omega\bigl(\sqrt{\log n}\bigr)$ vertex-cops, we then consider $G_{n,s,r}$ with $s = 8$ and $r = 3s(n-1)+1$.
    Now we copy the evasion strategy~$\mathcal{S}$ for the robber as follows:
    Whenever it is the robber's turn and the face-cops occupy faces $f_1, f_2, \ldots, f_k$ in~$G_{n,s,r}$, consider the corresponding situation in~$G_n$ where the vertex-cops occupy vertices $v_{f_1}, v_{f_2}, \ldots, v_{f_k}$, respectively.
    Based on these positions,~$\mathcal{S}$ tells the robber to go to a~vertex~$v$ at distance~$d \leq q = 8$ from the current position of the robber in~$G_n$.
    By \cref{lem:cop_moves_along_grid}, the robber in~$G_{n,r,s}$ can go to~$v$ in at most $(2s+1)d \leq (2\cdot 8+1)\cdot 8 = 136$ turns.

    In the meantime, each face-cop also makes up to~$136$ moves in~$G_{n,r,s}$, traveling from some face~$a$ to some face~$b$, which is interpreted in~$G_n$ as the corresponding vertex-cop traveling from~$v_a$ to~$v_b$.
    For~$v_a$ and~$v_b$ to be at distance~$d' \geq 8$ in~$G_n$, the face-cop needs at least $3s(d'-2) \geq 3 \cdot 8 \cdot 6 = 144$ turns, by \cref{lem:cop_moves_along_shallow_faces}, which is strictly more than~$136$.
    Thus, after~$136$ turns, each vertex-cop made at most $p = 7$ steps in~$G_n$, as required for strategy~$\mathcal{S}$.

    Hence, the robber can evade~$k$ face-cops in~$G_{n,s,r}$, proving~$c^*(G_{n,s,r}) > k$.
    Since~$G_{n,s,r}$ for~$s \in O(1)$ and~$r \in O(n)$ has~$O(n^2)$ vertices, this completes the proof.
\end{proof}

\section{Conclusion and Discussion}
Let $c^*_\Delta$ denote the largest primal-dual cop number among all plane graphs with maximum degree~$\Delta$.
We have shown that $c^*_3 = 3$, $c^*_4 = 6$ (both bounds are tight), and $\textrm{sup}(c^*_5) = \infty$, while it is easy to see that $c^*_1 = 1$, $c^*_2 = 2$, and $\textrm{sup}(c^*_\Delta) = \infty$ for all $\Delta > 5$.
Let us remark that our proof for $\Delta = 5$ also holds for a variant of the game where the robber is already captured when a single cop reaches an incident face.
On the other hand, all our results for~$\Delta \leq 4$ can be also carried over to more restrictive variants in which the robber may not traverse an edge if one (or both) incident face(s) is/are occupied by a cop.

An interesting direction for future research would be to identify classes of plane graphs with unbounded maximum degree for which $c^*(G) \leq f(\Delta(G))$ for some function~$f$.
For example, what about plane Halin graphs or plane $3$-trees, also known as stacked triangulations?

Lastly, there are (at least) two possibilities to generalize Primal-Dual Cops and Robber to classes of potentially non-planar graphs.
\begin{itemize}
    \item Let~$G$ be a graph with a crossing-free embedding on any (orientable or non-orientable) surface~$\Sigma$.
    Our proof for the upper bound in the case~$\Delta(G) \leq 3$ still works.
    In particular, in that case, the primal-dual cop number is independent of the (non-orientable) genus of~$\Sigma$.
    Similarly, our result still holds for~$\Delta(G) \geq 5$ as the graph~$G_{n,s,r}$ constructed in the proof of \cref{thm:main} (Statement~\ref{enum:degree_5}) can be embedded on~$\Sigma$.
    Finally, turning our heads to~$\Delta(G) \leq 4$, the same strategy as that proposed in the proof of \cref{thm:main} (Statement~\ref{enum:degree_4}) can be used to give an upper bound of~$c(G^*) + \Delta(G) - 1$ on the primal-dual cop number in this case (where~$G^*$ is the dual graph corresponding to the given embedding on~$\Sigma$).
    In particular, when~$\Sigma$ is orientable with genus~$g$, it is known that~$c(G^*) \leq 1.268g$~\cite{Erde2021_BoundedGenus} (using the fact that~$G^*$ also has a crossing-free embedding on~$\Sigma$).
    
    \item A \emph{cycle double cover} of a graph~$G$ is a collection of cycles~$\mathcal{C}$ such that each edge of~$G$ is contained in exactly two cycles.
    For example, the set of a facial cycles of a biconnected plane graph forms a cycle double cover.
    It is famously conjectured that every biconnected graph has a cycle double cover.
    Given~$G$ and~$\mathcal{C}$, one can consider a variant of Cops and Robber with \emph{cycle-cops}, i.e., a cop always occupies a cycle of~$\mathcal{C}$ and can move in one turn from her current cycle $C \in \mathcal{C}$ to another cycle in $\mathcal{C}$ that shares an edge with $C$.
    The robber behaves as in the original game and is captured if all cycles incident to his vertex are occupied by cycle-cops.
    Our lower bound on the primal-dual cop number for graphs with~$\Delta \geq 5$ is still valid in this variant:
    the constructed graph~$G_{n,r,s}$ is planar and biconnected, and so its faces in the described planar embedding yield a cycle double cover.
    It is an interesting open question to consider the case~$\Delta \leq 4$ here.
\end{itemize}

\bibliographystyle{plainurl}
\bibliography{references}

\end{document}